\documentclass[12pt,reqno]{amsart}

\addtolength{\textwidth}{1in} \addtolength{\oddsidemargin}{-.4in}
\addtolength{\evensidemargin}{-.4in}
\addtolength{\topmargin}{-.25in} \addtolength{\textheight}{1in}

\parskip 10pt

\usepackage{wasysym,latexsym}
\usepackage{mathtools,xcolor,enumitem}

\numberwithin{equation}{section}

\newtheorem{lemma}{Lemma}[section]
\newtheorem{theorem}{Theorem}[section]

\theoremstyle{definition}

 \DeclareMathOperator{\RE}{Re}

\begin{document}

\title[Starlikeness of Certain Analytic Functions]{Starlikeness of Certain Analytic Functions}

\dedicatory{Dedicated to Prof.\ Dato' Indera Rosihan M. Ali}

\author[A. S. Ahmad Al-faqeer]{Ahmad Sulaiman Ahmad El-faqeer}

\address{School of Mathematical Sciences,
Universiti Sains Malaysia, 11800 Penang, Malaysia}
\email{ahmedfakier@student.usm.my}

\author[M. H. Mohd]{Maisarah Haji Mohd }
\address{ School of Mathematical Sciences,
Universiti Sains Malaysia, 11800 Penang, Malaysia}
\email{ maisarah-hjmohd@usm.my}

\author{\break V. Ravichandran}
\address{Department of Mathematics, National Institute of Technology, Tiruchirappalli 620 015,
India} \email{ravic@nitt.edu; vravi68@gmail.com}

\author[S. Supramaniam]{Shamani Supramaniam}
\address{ School of Mathematical Sciences,
Universiti Sains Malaysia, 11800 Penang, Malaysia}
\email{shamani@usm.my}

\subjclass[2000]{30C45}

\begin{abstract}
Let $f$ and $g$ be analytic functions on the open unit disk of the complex plane with $f/g$ belonging to the class $\mathcal{P} $ of functions  with positive real part  consisting of functions $p$ with $p(0)=1$ and $\RE p(z)>0$ or to its subclass  consisting of functions $p$ with $|p(z)-1|<1$.
We obtain the sharp radius constants for the  function $f$  to be starlike   of order $\alpha$, parabolic starlike, etc. when  $g/k\in\mathcal{P}$
where $k$ denotes the Koebe function defined by $k(z)=z/(1-z)^2$.
\end{abstract}

\keywords{Starlikeness; radius problem; functions with positive real part; parabolic starlike functions; univalent functions}
\subjclass{30C45; 30C80}
\maketitle

\section{Motivation and Radii Results}
Let $\mathcal{A}$ be the class of all analytic function $f$ on the open unit disk $\mathbb{D}$ normalized by $f(0)=0$ and $f'(0)=1$, and let $\mathcal{S}$ be the subclass  of all univalent function in $\mathcal{A}$. For  two arbitrary subclasses  $\mathcal{F}$ and $\mathcal{G}$ of  $\mathcal{A}$, the $\mathcal{G}$ radius of $\mathcal{F}$, denoted by $\mathcal{R}_{\mathcal{G}}(\mathcal{F})$,  is defined as the largest number $\mathcal{R}_{\mathcal{G}}$ such that $r^{-1}f(rz)\in \mathcal{G}$ for  all $r$ with  $0<r<\mathcal{R}_{\mathcal{G}}$, and for all $f\in\mathcal{F}$. Whenever the class $\mathcal{G}$ is characterized by a geometric property $\mathbf{P}$ the number $\mathcal{R}_\mathcal{G}$ is called as the radius of the property $\mathbf{P}$ of the class $\mathcal{F}$. Although there are variety of  radius problems considered in literature, we investigate the functions $f$ characterized by the ratio of $f$ with another function $g\in\mathcal{A}$; these kinds of problems were considered by  MacGregor \cite{T.H.M,T.H.MM,T.H.MMM}. Ali \emph{et al.} determined various radii results for  functions $f$ satisfying the following conditions:
(i) $\operatorname{Re}\left(f(z)/g(z)\right)>0$ where $\operatorname{Re} \left(g(z)/z\right)>0$ or $\operatorname{Re} \left(g(z)/z\right)>1/2$ (ii)  $|(f(z)/g(z)-1)-1|<1$ where $\operatorname{Re} \left(f(z)/g(z)\right)>0$ or $g$ is convex. All these classes are associated to class of functions with positive real part; this class, denoted by $\mathcal{P}$, consists of   all analytic functions $p : \mathbb{D} \rightarrow \mathbb{C}$ with $p(0)=1$ and $\operatorname{Re}\left( p(z)\right)> 0$ for all $z \in \mathbb{D}$. Asha and Ravichandran \cite{AS.VR} investigated  several radii for the functions $f/g\in \mathcal{P}$ and $(1+z)g/z \in \mathcal{P}$, belonging to some subclasses of starlike functions (see \cite{Kanaga,Kanika} for further works). For $0\leq \alpha<1$, we let $\mathcal{P}(\alpha):=\{p\in\mathcal{P}: \operatorname{Re}p(z)>\alpha\}$.  Let  $k$ be the Koebe function defined by $k(z)=z/(1-z)^2$.  In this paper, we consider the two subclasses $\Pi_{1}$ and $\Pi_{2}$ of analytic functions given below:
\begin{align*}
\Pi_{1}& =: \left\{f\in \mathcal{A}: f/g\in\mathcal{P}     \text{ for some }   g\in \mathcal{A} \text{ with }   g/k\in\mathcal{P}   \right\},\shortintertext{and}
\Pi_{2}&=:\left\{f\in \mathcal{A}: \left|f/g-1\right| <1   \text{ for some }   g \in \mathcal{A} \text{ with }  g/k \in\mathcal{P}  \right\}.
\end{align*}
We determine radii for functions in these two classes to belongs to  several subclasses of starlike functions which we discuss below.

In $1985$, Padmanabhan and Parvatham \cite{KSP} used the Hadamard product (convolution) and subordination to introduce the class of functions  $f\in\mathcal{A}$ satisfying  $z(k_{\alpha}\ast f)'/(k_{\alpha}\ast f)\prec h$ where $k_{\alpha}(z)=z/(1-z)^{\alpha}$, $ \alpha\in \mathbb{R}$, and $h$ is convex. When $h$ is the normalized mapping of $\mathbb{D}$ onto the right half-plane, this class  reduces to the usual classes of starlike and convex functions respectively for $\alpha=1$ and $\alpha=2$. Later, in 1989, Shanmugam \cite{T.N.S} studied the class $\mathcal{S}^*_{g}(\varphi)=:\left\{f\in \mathcal{A}:z(f\ast g)'/(f\ast g)\prec \varphi\right\}$ where $g$ is fixed and $\varphi$ a convex function, respectively; this class includes several classes defined by means of linear operator such as Rucheweyh differential operator and  Salagean operator. When $g(z)=z/(1-z)$ and $g(z)=z/(1-z)^2$,  the subclass $\mathcal{S}^*_{g}(\varphi)$ is denoted respectively by  $\mathcal{S}^*(\varphi)$ and $\mathcal{K}(\varphi)$. In $1992$, Ma and Minda \cite{W.C.MA} studied growth, distortion, covering theorems and coefficient problems for the  classes  $\mathcal{S}^*(\varphi)$ and $\mathcal{K}(\varphi)$ when   $ \varphi\in \mathcal{P}$ is just a  univalent function mapping unit disk $\mathbb{D}$ onto domain symmetric with respect to the real line and starlike with respect to $\mathcal{\varphi}(0)=1$ and $\mathcal{\varphi}'(0)>0$. For $\varphi(z)=(1+(1-2\alpha)z)/(1-z)$ with $0\leq \alpha<1$, the classes $\mathcal{S}^*(\varphi)$ and $\mathcal{K}(\varphi)$ reduce  to the class $\mathcal{S}^*(\alpha)$ of starlike functions of  order $\alpha$ and the class $\mathcal{K}(\alpha)$  of convex functions of order $\alpha$  respectively. For more work in this direction, see \cite{PLD,A.W.G}.
When $\varphi$  equals $1+(2/\pi)^2(\log((1+\sqrt{z})/(1-\sqrt{z})))^2$, $\sqrt{1+z}$,  $e^z$, $1+(4/3)z+(2/3)z^{2}$, $\sin z$,  $z + \sqrt{1 + z^{2}} $ and $1+(zk+z^2/(k^2-kz))$ where $ k=\sqrt{2}+1$,  we denote  the class $\mathcal{S}^*(\varphi)$ respectively by   $\mathcal{S}_{P}$,   $\mathcal{S}_{L}^{*}$,   $\mathcal{S}_{e}^{*}$,   $\mathcal{S}_{c}^{*}$,  $\mathcal{S}_{\sin}^{*}$,  $\mathcal{S}_{\leftmoon}^{*}$, and  $\mathcal{S}_{R}^{*}$.  The class $\mathcal{S}^*_{L}$ was introduced by  Sok\'{o}l and Stankiewicz \cite{JS.JS}. For information about the other classes, we refer to the recent articles \cite{AS.VR,Kanaga,Kanika}.

The functions $f_0, f_1:\mathbb{D}\to\mathbb{C}$ defined by
\begin{align}\label{Z4} f_{0}(z)=\frac{z(1+z)^2}{(1-z)^4} \quad \text{and} \quad  f_{1}(z)=\frac{z}{(1+z)^2} \end{align}  belongs to the class $\Pi_1$ and therefore the class $\Pi_1$ is non-empty. They satisfy the required conditions with the functions $g_{0},g_{1}:\mathbb{D}\rightarrow \mathbb{C}$    defined by
\[
g_{0}(z)=\frac{z(1+z)}{(1-z)^3} \quad \text{and} \quad g_{1}(z)=\frac{z}{1-z^2};
\] indeed, we have
\[ \operatorname{Re} \frac{ f_{i}(z)}{g_{i}(z)}>0 \quad \text{and} \quad  \operatorname{Re} \frac{(1-z)^2g_{i}(z)}{z}  >0 \]
for $i=0,1$. The function $f_{0}$ is an extremal function for the radius problem that we consider. However, the function  $f_1$ is univalent, but  the function $f_0$ is not univalent as  the coefficients of the  Taylor's series  $f_{0}(z)=z + 6z^2 + 19z^3 + 44z^4   +\cdots$ do not satisfy the de Branges theorem (previously the Bieberbach conjecture).
The derivative of $f_0$ is given by
\[f_0'(z)=\frac{(1+6z+z^2)(1+z)}{(1-z)^5}.\]
Since  $f_0'(-3+2\sqrt{2})=0$ and, by Theorem~\ref{th1}~(1), the radius of starlikeness of the class $\Pi_1$ is $3-2\sqrt{2}$, it follows that the radius of univalence of this class is also $3-2\sqrt{2}\approx 0.171573$.
The other radius results for the class $\Pi_{1}$ are given in the following theorem.

\begin{theorem}\label{th1}
The following radius results hold for the class $\Pi_{1}$:
 \begin{enumerate}
  \item The $\mathcal{S}^*(\alpha)$ radius is $R_{\mathcal{S}^*(\alpha)}= (1-\alpha)/(3+\sqrt{8+\alpha^2})$, \quad $ 0\leq \alpha<1$.

  \item The $\mathcal{S}_{L}^{*}$ radius is  $R_{\mathcal{S}_{L}^{*}}= (\sqrt{2}-1)(\sqrt{10}-3)\approx 0.067217$.
  \item The $\mathcal{S}_{P}$ radius is $R_{\mathcal{S}_{P}}=(6 - \sqrt{33})/3 \approx0.0851$.
\item The $\mathcal{S}_{e}^{*}$ radius is $R_{\mathcal{S}_{e}^{*}}= (e-1)/(3 e+\sqrt{8 e^2+1}) \approx 0.1080$.
\item The $\mathcal{S}_{c}^{*}$ radius is $R_{\mathcal{S}_{c}^{*}}=  (9 - \sqrt{73})/4\approx 0.1140$.
\item The $\mathcal{S}_{\sin}^{*}$ radius is  $R_{\mathcal{S}_{\sin}^{*}}= \sin (1)/(\sqrt{9+\sin ^2(1)+2 \sin (1)}+3)  \approx0.1320$.
\item The $\mathcal{S}_{\leftmoon}^{*}$ radius is $R_{\mathcal{S}_{\leftmoon}^{*}}= 3/\sqrt{2}-\sqrt{1/2 \left(11-2 \sqrt{2}\right)}\approx0.09999$.
\item The $\mathcal{S}_{R}^{*}$ radius $R_{\mathcal{S}_{R}^{*}}= (3-2 \sqrt{5-2 \sqrt{2}})/(2 \sqrt{2}-1)\approx0.0289$.
\end{enumerate}
\end{theorem}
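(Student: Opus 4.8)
The plan is to reduce every one of the eight assertions to a single disk estimate for the logarithmic derivative $zf'(z)/f(z)$ and then to intersect that disk with each target region. Fix $f\in\Pi_1$ and write $p_1:=f/g$ and $p_2:=g/k$, so that $p_1,p_2\in\mathcal{P}$ and $f=k\,p_1p_2$ with $k(z)=z/(1-z)^2$. Logarithmic differentiation gives
\[
\frac{zf'(z)}{f(z)}=\frac{zk'(z)}{k(z)}+\frac{zp_1'(z)}{p_1(z)}+\frac{zp_2'(z)}{p_2(z)}=\frac{1+z}{1-z}+\frac{zp_1'(z)}{p_1(z)}+\frac{zp_2'(z)}{p_2(z)},
\]
since $zk'(z)/k(z)=(1+z)/(1-z)$; the Koebe term already carries the essential geometry and the two $\mathcal{P}$-factors contribute only a perturbation.

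Next I would invoke two classical estimates valid for every $p\in\mathcal{P}$ and $|z|=r$: the image $(1+z)/(1-z)$ lies in the disk centered at $(1+r^2)/(1-r^2)$ of radius $2r/(1-r^2)$, and $|zp'(z)/p(z)|\le 2r/(1-r^2)$ (the latter from $p=(1+\omega)/(1-\omega)$ with $\omega$ a Schwarz function, together with the Schwarz--Pick inequality and $|1-\omega^2|\ge 1-|\omega|^2$). Summing the three contributions yields the master estimate, for $|z|=r$,
\[
\left|\frac{zf'(z)}{f(z)}-\frac{1+r^2}{1-r^2}\right|\le\frac{6r}{1-r^2}=:c(r),\qquad a(r):=\frac{1+r^2}{1-r^2}.
\]
Thus $zf'(z)/f(z)$ is confined to the disk $\Delta_r:=\{w:|w-a(r)|\le c(r)\}$, and each radius constant is the largest $r$ for which $\Delta_r$ lies inside the relevant region: the half-plane $\operatorname{Re}w>\alpha$ in part~(1) and the Ma--Minda domain $\varphi(\mathbb{D})$ in parts~(2)--(8).

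For each class I would then apply the known containment criterion for a disk centered on the positive real axis, which amounts to locating the boundary point of the region nearest to $a(r)$. For $\mathcal{S}^*(\alpha)$, $\mathcal{S}_P$, $\mathcal{S}_e^*$, $\mathcal{S}_c^*$, $\mathcal{S}_{\leftmoon}^*$ and $\mathcal{S}_R^*$ the governing point lies on the left, so the condition is $a(r)-c(r)\ge\beta$ with $\beta$ equal to $\alpha$, to the parabola vertex $1/2$, and to the leftmost real values $1/e$, $1/3$, $\sqrt{2}-1$, $2\sqrt{2}-2$ respectively; for $\mathcal{S}_L^*$ and $\mathcal{S}_{\sin}^*$ the center sits so close to the right boundary that instead $a(r)+c(r)\le\beta$ binds, with $\beta=\sqrt{2}$ and $\beta=1+\sin1$. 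In every case this is a quadratic in $r$ whose unique root in $(0,1)$, after rationalization, is exactly the stated constant; for instance $a(r)-c(r)=\alpha$ becomes $(1+\alpha)r^2-6r+(1-\alpha)=0$, whose smaller root rationalizes to $(1-\alpha)/(3+\sqrt{8+\alpha^2})$.

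Finally I would settle sharpness with the extremal function $f_0(z)=z(1+z)^2/(1-z)^4$, for which $p_1=p_2=(1+z)/(1-z)$ and hence $zf_0'(z)/f_0(z)=(1+z)/(1-z)+4z/(1-z^2)$. At $z=-r$ this equals $a(r)-c(r)$ and at $z=r$ it equals $a(r)+c(r)$, so $f_0$ attains the governing boundary point of the region precisely at $r=R$, showing that no radius can be enlarged. The main obstacle is not the algebra but this third step: justifying region by region that the extremal distance used is genuinely the \emph{minimum} distance from $a(r)$ to $\partial\varphi(\mathbb{D})$ --- in particular deciding whether the left real point, the right real point, or an off-axis tangency controls containment (the lemniscate and sine regions behave oppositely to the rest), and verifying that $\Delta_r$ does not escape the region elsewhere. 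These are exactly the geometric containment lemmas that make each part sharp.
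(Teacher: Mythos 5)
Your proposal is correct and follows essentially the same route as the paper: the same factorization $f=k\,p_1p_2$ with $p_1,p_2\in\mathcal{P}$, the same master disk estimate $\left|zf'/f-(1+r^2)/(1-r^2)\right|\le 6r/(1-r^2)$, the same per-class containment criteria (left real point $\alpha$, $1/2$, $1/e$, $1/3$, $\sqrt{2}-1$, $2(\sqrt{2}-1)$, versus right real point $\sqrt{2}$ and $1+\sin 1$ for the lemniscate and sine regions), which are precisely the cited disk-containment lemmas, and the same extremal function $f_0$ touching the boundary at $z=\pm\rho$. The only cosmetic difference is in the $\mathcal{S}_L^*$ case, where the paper replaces the lemniscate lemma by a direct estimate of $|zf'/f-1|\,|zf'/f+1|$, but that argument is equivalent to your condition $a(r)+c(r)\le\sqrt{2}$.
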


The functions $f_{2},f_{3}: \mathbb{D}\rightarrow \mathbb{C}$   defined by
\begin{align}\label{Z5}
f_{2}(z)=\frac{z}{1-z}   \quad \text{and}\quad  f_{3}(z)=\frac{z(1+z)^2}{(1-z )^3},
\end{align} satisfy the  conditions $|f_i(z)/g_i(z)-1|<1$ and $\operatorname{Re}((1-z)^2g_i(z))>0$ for $i=2,3$ with $g_2, g_3 : \mathbb{D}\rightarrow \mathbb{C}$   defined by
\begin{align}\label{Z5}
g_{2}(z)=\frac{z}{1-z^2}   \quad \text{and}\quad  g_{3}(z)=\frac{z(1+z)}{(1-z)^3},
\end{align}
and hence $f_{2},f_3\in \Pi_{2}$. This proves that the class   $\Pi_{2}$ is non-empty. The Taylor series $f_{3}(z)=z+5z^2+13z^3+25z^4+  \cdots$  shows that it is not  univalent. It is an extremal function for the radius problems we consider. The derivative of $f_3$ is given by
\[f_3'(z)=\frac{(1+5z)(1+z)}{(1-z)^4}.\]
Since  $f_3'(-1/5)=0$ and, by Theorem~\ref{th2}~(1), the radius of starlikeness of the class $\Pi_1$ is $1/5$, it follows that the radius of univalence of this class is also $1/5$. The other radius results for class $\Pi_2$ are given in the following theorem.

\begin{theorem}\label{th2}
The following radius results hold for the class $\Pi_{2}$:
\begin{enumerate}
\item The  $\mathcal{S}^*(\alpha)$ radius is $R_{\mathcal{S}^*(\alpha)}=2(1-\alpha)/(5+\sqrt{4 \alpha^2-4 \alpha+25})$, \quad $ 0\leq \alpha<1$.
\item The $\mathcal{S}_{L}^{*}$ radius is at least $R_{\mathcal{S}_{L}^{*}} =(\sqrt{4 \sqrt{2}+25}-5)/(2 (\sqrt{2}+2))\approx 0.0786$.
  \item The $\mathcal{S}_{p}$ radius is $R_{\mathcal{S}_{p}} =5 - 2 \sqrt{6}\approx 0.1010$.
\item The $\mathcal{S}_{e}^{*}$ radius is $\mathcal{S}_{e}^{*} =(2(e-1))/ (5 e + \sqrt{ 25 e^2 -4e+4 })\approx 0.1276$.
\item The $\mathcal{S}_{c}^{*}$ radius is $R_{\mathcal{S}_{c}^{*}}=  \left(15-\sqrt{217}\right)/2\approx 0.1345$.
\item The $\mathcal{S}_{sin}^{*}$ radius is at least $\mathcal{S}_{sin}^{*}= (\sqrt{25+4 (3+\sin (1)) \sin (1)}-5)/(2 (3+\sin (1)))\approx 0.1508$.
\item The $\mathcal{S}_{\leftmoon}^{\ast}$ radius is $R_{\mathcal{S}_{\leftmoon}^{*}}= (5-\sqrt{41-12 \sqrt{2}})/(2 \left(\sqrt{2}-1\right))\approx 0.1183$.
\item The $\mathcal{S}_{R}^{*}$ radius is $R_{\mathcal{S}_{R}^{*}}= (5-\sqrt{81-40 \sqrt{2}})/(4 \left(\sqrt{2}-1\right))\approx 0.0345$.

\end{enumerate}
\end{theorem}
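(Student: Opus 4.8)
The plan is to encode an arbitrary $f\in\Pi_{2}$ by two Carath\'eodory-type functions and reduce each assertion to a disk-containment problem. If $f\in\Pi_{2}$, choose $g$ with $g/k\in\mathcal{P}$ and $|f/g-1|<1$, and set $P=g/k$ and $u=f/g$. Then $P\in\mathcal{P}$ with $P(0)=1$, while $u(0)=1$ and $|u(z)-1|<1$, so $u=1+\phi$ for some Schwarz function $\phi$ (i.e.\ $\phi(0)=0$, $|\phi|<1$). Since $g=kP$ gives $f(z)=\dfrac{z}{(1-z)^{2}}P(z)u(z)$, logarithmic differentiation produces
\[
\frac{zf'(z)}{f(z)}=\frac{1+z}{1-z}+\frac{zP'(z)}{P(z)}+\frac{zu'(z)}{u(z)}.
\]
My first and central step would be a master lemma: for $|z|=r<1$ the point $zf'(z)/f(z)$ lies in the closed disk
\[
\left|w-\frac{1+r^{2}}{1-r^{2}}\right|\le \frac{r(5+r)}{1-r^{2}}.
\]

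To prove the lemma I would bound the three summands separately on $|z|=r$. The M\"obius term $(1+z)/(1-z)$ maps the circle $|z|=r$ onto the circle with centre $(1+r^{2})/(1-r^{2})$ and radius $2r/(1-r^{2})$. For $P\in\mathcal{P}$ the classical estimate $|P'(z)|\le 2\operatorname{Re}P(z)/(1-r^{2})$ together with $\operatorname{Re}P\le|P|$ gives $|zP'(z)/P(z)|\le 2r/(1-r^{2})$. For the last term I would use the Schwarz--Pick inequality $|\phi'(z)|\le(1-|\phi(z)|^{2})/(1-r^{2})$ and $|1+\phi(z)|\ge 1-|\phi(z)|$ to get $|zu'(z)/u(z)|=|z\phi'(z)/(1+\phi(z))|\le r(1+|\phi(z)|)/(1-r^{2})\le r/(1-r)$, the last step by the Schwarz lemma $|\phi(z)|\le r$. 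Summing the three radii, $2r/(1-r^{2})+2r/(1-r^{2})+r/(1-r)=r(5+r)/(1-r^{2})$, establishes the master disk.

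With the master disk in hand each part becomes a question of inscribing it in the relevant Ma--Minda domain $\varphi(\mathbb{D})$. For $\mathcal{S}^{*}(\alpha)$ I would demand that the leftmost point of the disk, namely $(1+r^{2})/(1-r^{2})-r(5+r)/(1-r^{2})=(1-5r)/(1-r^{2})$, be at least $\alpha$; the inequality $\alpha r^{2}-5r+(1-\alpha)\ge0$ then yields $R_{\mathcal{S}^{*}(\alpha)}=2(1-\alpha)/(5+\sqrt{4\alpha^{2}-4\alpha+25})$. For the remaining classes I would invoke the known description of the largest disk, centred at a real point $a$, contained in each domain; this criterion has the form $\text{radius}\le\delta(a)$, where $\delta$ is the distance to the nearest boundary point. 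For the parabolic, exponential, cardioid, lune and rational classes the nearest boundary point of $\varphi(\mathbb{D})$ lies to the left of the centre (it is $1/2$, $1/e$, $1/3$, $\sqrt{2}-1$ and $2(\sqrt{2}-1)$ respectively), so $\delta(a)=a-\beta$; substituting $a=(1+r^{2})/(1-r^{2})$ and radius $r(5+r)/(1-r^{2})$ gives a quadratic in $r$ whose smaller positive root is the stated constant (for instance $\beta=1/2$ gives $r^{2}-10r+1\ge0$, hence $R_{\mathcal{S}_{P}}=5-2\sqrt{6}$). For $\mathcal{S}_{L}^{*}$ and $\mathcal{S}_{\sin}^{*}$ the binding boundary point is instead the rightmost one ($\sqrt{2}$ and $1+\sin1$), giving $\delta(a)=\beta-a$ and the corresponding quadratics.

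Sharpness I would check on the extremal pair $f_{3}(z)=z(1+z)^{2}/(1-z)^{3}$, $g_{3}(z)=z(1+z)/(1-z)^{3}$, for which $f_{3}/g_{3}=1+z$, $g_{3}/k=(1+z)/(1-z)\in\mathcal{P}$, and $zf_{3}'(z)/f_{3}(z)=(1+5z)/(1-z^{2})$. At $z=-r$ this equals $(1-5r)/(1-r^{2})$, exactly the leftmost point of the master disk, so all three summand bounds are attained simultaneously; this shows the master disk is best possible and forces sharpness for every class whose binding boundary point lies to the left. For $\mathcal{S}_{L}^{*}$ and $\mathcal{S}_{\sin}^{*}$ the relevant point lies to the right, while $f_{3}$ only certifies the left boundary (indeed at $z=r$ it gives $(1+5r)/(1-r^{2})$, short of the disk's right extreme), which is precisely why those two radii are asserted only as lower bounds. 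I expect the main obstacle to be the geometric step: proving the exact inscribed-disk criterion for each domain and correctly identifying which boundary point is binding for centres $a$ near $1$ (e.g.\ checking $a<3/2$ so that the parabola's vertex, not its flank, governs the estimate), rather than the third-term Schwarz--Pick bound or the routine quadratic algebra.
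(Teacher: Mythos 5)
Your proposal is correct and follows essentially the same route as the paper: the identical master disk $\bigl|zf'(z)/f(z)-(1+r^2)/(1-r^2)\bigr|\le (5r+r^2)/(1-r^2)$ (the paper's inequality (2.20)), the same disk-containment lemmas with the same binding boundary points and resulting quadratics, and the same sharpness argument via $f_3$ at $z=-\rho$, with parts (2) and (6) correctly left as lower bounds because $f_3$ only attains the left edge of the disk. The only cosmetic differences are that you prove the bound $|zu'(z)/u(z)|\le r/(1-r)$ directly by Schwarz--Pick on $u=1+\phi$, whereas the paper rewrites $|f/g-1|<1$ as $g/f\in\mathcal{P}(1/2)$ and quotes the standard $\mathcal{P}(\alpha)$ derivative estimate (the two are equivalent, since $u=1/(g/f)$), and for the lemniscate you invoke the containment lemma directly where the paper uses the factorization $|w^2-1|=|w-1|\,|w+1|$ --- both yield the same quadratic $(2+\sqrt{2})r^2+5r-(\sqrt{2}-1)=0$.
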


It is worth to point out that $R_{\mathcal{S}_{P}^{*}}=
 R_{\mathcal{S}^{*}(1/2)}$  and $R_{\mathcal{S}_{e}^{*}}=
 R_{\mathcal{S}^{*}(1/e)}$ in both theorems.

\section{Proofs of theorems}

We need the following lemmas to prove our results.

\begin{lemma}\label{L1}\cite[Lemma 2.2, p.4]{RM.NK}
For $0<\alpha<\sqrt{2}$, let $\textbf{r}_{a}$ be given by
\[
\textbf{r}_{a} =
     \begin{cases}
            (\sqrt{1-a^2}-(1-a^2))^{\frac{1}{2}} , & 0<a\leq2\sqrt{2}/3\\
          \sqrt{2}-a, & 2\sqrt{2}/3\leq a<\sqrt{2}.
     \end{cases}
\]
Then $\left\{\omega:|\omega-a|<r_{a} \right\}\subseteq \left\{\omega:|\omega^2-1|<1 \right\}$.
\end{lemma}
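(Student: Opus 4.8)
The plan is to read the lemma geometrically. The set $\Omega=\{\omega:|\omega^2-1|<1\}$ is the interior of the Bernoulli lemniscate, a figure-eight region symmetric about both axes whose two lobes meet only at the origin. Since $0<a<\sqrt{2}$, the point $a$ sits on the real axis inside the right lobe, and $\mathbf{r}_a$ is nothing but the distance from $a$ to the boundary curve $\partial\Omega=\{\omega:|\omega^2-1|=1\}$. Once that distance is computed, the inclusion $\{\omega:|\omega-a|<\mathbf{r}_a\}\subseteq\Omega$ follows automatically: $\Omega$ is open, so the largest open disk about the interior point $a$ that is contained in it has radius equal to the distance from $a$ to $\partial\Omega$. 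Thus the entire problem reduces to minimizing $|\omega-a|^2$ subject to the constraint $|\omega^2-1|^2=1$.

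Writing $\omega=x+iy$, a short expansion shows the constraint $|\omega^2-1|^2=1$ collapses to $(x^2+y^2)^2=2(x^2-y^2)$. I would then apply Lagrange multipliers to $(x-a)^2+y^2$ under this constraint. The stationarity equation in the $y$-variable factors as $y\bigl(1-2\lambda(R+1)\bigr)=0$, where $R:=x^2+y^2$, giving two alternatives. Either $y=0$, which forces $\omega$ to be the tip $\omega=\sqrt{2}$ and yields distance $\sqrt{2}-a$; or $y\neq0$, in which case eliminating the multiplier from the $x$-equation produces $x=a(R+1)/2$. Feeding this into the relation $x^2=R(R+2)/4$ read off from the constraint, I expect the $R$-equation to simplify to $(R+1)^2(1-a^2)=1$, hence $R+1=1/\sqrt{1-a^2}$. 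The squared distance then telescopes to $D^2=R-2ax+a^2=R(1-a^2)=\sqrt{1-a^2}-(1-a^2)$, reproducing the first branch of $\mathbf{r}_a$.

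The hard part is deciding which critical point realizes the global minimum, and this is exactly what generates the threshold at $a=2\sqrt{2}/3$. The off-axis critical point is a genuine point of the curve only when $y^2=R-x^2\geq0$. Setting $s=\sqrt{1-a^2}$ and substituting $x^2=a^2(R+1)^2/4$ together with $R+1=1/s$, I would reduce this nonnegativity requirement to $(3s-1)(s-1)<0$, i.e. $s>1/3$, equivalently $a<2\sqrt{2}/3$. Consequently, for $0<a\leq2\sqrt{2}/3$ the off-axis point exists and supplies the nearest boundary point, giving $\mathbf{r}_a=\bigl(\sqrt{1-a^2}-(1-a^2)\bigr)^{1/2}$, whereas for $2\sqrt{2}/3\leq a<\sqrt{2}$ no off-axis critical point survives and the minimum is forced onto the tip, giving $\mathbf{r}_a=\sqrt{2}-a$.

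To finish, I would verify continuity at the junction (both formulas return $\sqrt{2}/3$ at $a=2\sqrt{2}/3$) and confirm through a second-order or direct sign check that the located stationary point is a minimum rather than a maximum; a convenient alternative to this last verification is to parametrize the right lobe by $\rho^2=2\cos2\theta$ and minimize $D^2(\theta)=2\cos2\theta-2a\sqrt{2\cos2\theta}\,\cos\theta+a^2$ directly over $\theta\in(-\pi/4,\pi/4)$, where the value $\theta=0$ is always critical and the competing interior zero of $D'$ exists precisely in the regime $a<2\sqrt{2}/3$. Either way, the openness of $\Omega$ then delivers the asserted inclusion, completing the proof.
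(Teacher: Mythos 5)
The paper itself offers no proof of this statement: Lemma~\ref{L1} is quoted, with citation, from Ali--Jain--Ravichandran \cite{RM.NK}, so there is no in-paper argument to compare yours against; your proof has to stand on its own (and, for what it is worth, the cited source also argues by minimizing the distance from $a$ to the lemniscate, so your route is the natural one). Your reduction and computations are correct as far as they go: since $\Omega=\{\omega:|\omega^2-1|<1\}$ is open, the inclusion is equivalent to $\mathbf{r}_a\leq \operatorname{dist}(a,\partial\Omega)$; the boundary curve is $(x^2+y^2)^2=2(x^2-y^2)$; the off-axis Lagrange point satisfies $x=a(R+1)/2$ and $(R+1)^2(1-a^2)=1$, giving $D^2=R(1-a^2)=\sqrt{1-a^2}-(1-a^2)$; and with $s=\sqrt{1-a^2}$ the condition $y^2\geq 0$ reduces to $s\geq 1/3$, i.e.\ $a\leq 2\sqrt{2}/3$. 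All of this checks out.

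There is, however, one genuine oversight plus one deferred step. The oversight: you claim that $y=0$ ``forces $\omega$ to be the tip $\omega=\sqrt{2}$.'' It does not --- the curve also passes through the origin (and through $-\sqrt{2}$), and the origin is precisely the \emph{singular} point of the constraint, where $\nabla g=(4x(R-1),\,4y(R+1))$ vanishes; Lagrange multipliers are blind to it, yet it lies on $\partial\Omega$ at distance $a$ from your center, so it is a legitimate competitor for the nearest boundary point. You must add it to the candidate list and check $\mathbf{r}_a\leq a$: in the first branch this reads $s-s^2\leq 1-s^2$, i.e.\ $s\leq 1$, true; in the second branch it reads $\sqrt{2}-a\leq a$, true because $a\geq 2\sqrt{2}/3>\sqrt{2}/2$. (Your polar-parametrization alternative absorbs this automatically, since the origin appears there as the endpoint $\theta=\pm\pi/4$, where $D^2=a^2$.) The deferred step is the comparison showing that in the first branch the off-axis value also beats the tip; it does, and the verification is one line: $s-s^2\leq(\sqrt{2}-a)^2=3-s^2-2\sqrt{2}\sqrt{1-s^2}$ is equivalent, after rearranging to $2\sqrt{2}\sqrt{1-s^2}\leq 3-s$ and squaring, to $(3s-1)^2\geq 0$. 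With the origin added to the candidate list and this inequality inserted, your argument is a complete and correct proof of the lemma, including sharpness of the radius.
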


\begin{lemma}\label{L2}\cite[Lemma 1, p. 321]{TN.VR}
For $a >\frac{1}{2}$, let $\textbf{r}_{a}$ be given by
\[
\textbf{r}_{a} =
     \begin{cases}
            a-\frac{1}{2} ,&  \frac{1}{2}<a \leq \frac{3}{2}\\
       \sqrt{2a-2}, &  a \geq \frac{3}{2}
     \end{cases}
\]
Then $\left\{w:| w-a|<r_{a}\right\}\subseteq\left\{w:\operatorname{Re}w>|w-1| \right\} $.
\end{lemma}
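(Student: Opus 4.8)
The plan is to identify the target region on the right as the interior of a parabola and then reduce the inclusion to a single distance computation. Writing $w=u+iv$, the inequality $\operatorname{Re} w>|w-1|$ reads $u>\sqrt{(u-1)^2+v^2}$; squaring (which is permissible and in fact forces $u>0$) gives $u^2>(u-1)^2+v^2$, i.e. $2u>1+v^2$. Hence
\[
\Omega:=\{w:\operatorname{Re} w>|w-1|\}=\left\{u+iv:\ u>\tfrac{1}{2}(1+v^2)\right\},
\]
the interior of the parabola with vertex $1/2$ and axis the positive real axis. Since $\tfrac12(1+v^2)$ is a convex function of $v$, the set $\Omega$ is convex, and the center $a$ (real, $a>1/2$) satisfies $a>\tfrac12(1+0^2)$, so $a\in\Omega$.

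The key reduction is that for a point interior to an open set the largest open disk about it contained in the set has radius equal to the Euclidean distance from the center to the boundary; thus it suffices to show $\operatorname{dist}(a,\partial\Omega)=\mathbf{r}_a$. I would parametrize the boundary parabola by $v=t$, $u=\tfrac12(1+t^2)$ and minimize
\[
D(t)=\left(\tfrac{1}{2}(1+t^2)-a\right)^2+t^2=\tfrac14 t^4+\left(\tfrac32-a\right)t^2+\left(a-\tfrac12\right)^2
\]
over $t\in\mathbb{R}$. Differentiating gives $D'(t)=t\bigl(t^2-(2a-3)\bigr)$, so the real critical points are $t=0$ and, when $a\ge 3/2$, also $t^2=2a-3$.

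Two cases then finish the argument. For $1/2<a\le 3/2$ the quantity $2a-3$ is nonpositive, so $t=0$ is the only real critical point and $D(0)=(a-\tfrac12)^2$ is the global minimum, giving distance $a-\tfrac12=\mathbf{r}_a$. For $a\ge 3/2$, substituting $t^2=2a-3$ yields $D=2a-2$, while $D(0)-(2a-2)=(a-\tfrac32)^2\ge 0$ shows the interior critical point furnishes the global minimum, giving distance $\sqrt{2a-2}=\mathbf{r}_a$ (the two formulas agreeing at $a=3/2$). In either case $\operatorname{dist}(a,\partial\Omega)=\mathbf{r}_a$, so every $w$ with $|w-a|<\mathbf{r}_a$ lies in $\Omega$: were some such $w$ outside, the segment from $a$ to $w$ would meet $\partial\Omega$ at a point nearer than $\mathbf{r}_a$ to $a$, contradicting the computed distance. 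The main obstacle is the purely algebraic simplification confirming $D=2a-2$ at the interior critical point together with the sign check $(a-\tfrac32)^2\ge 0$ selecting the correct branch; the geometric reduction itself is clean once $\Omega$ is recognized as a parabolic region.
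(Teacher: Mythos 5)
Your proof is correct. Note, however, that the paper itself offers no proof of this lemma: it is quoted verbatim (as Lemma~\ref{L2}) from Shanmugam and Ravichandran \cite{TN.VR}, so there is no internal argument to compare against; what you have produced is a self-contained verification of the cited result. Your route is the standard one for lemmas of this type: rewrite $\operatorname{Re}w>|w-1|$ as the parabolic region $\Omega=\{u+iv: 2u>1+v^2\}$, observe that the largest open disk centred at $a\in\Omega$ contained in $\Omega$ has radius $\operatorname{dist}(a,\partial\Omega)$, and minimize $D(t)=\bigl(\tfrac12(1+t^2)-a\bigr)^2+t^2$ over the boundary parametrization. Your algebra checks out: $D(t)=\tfrac14t^4+(\tfrac32-a)t^2+(a-\tfrac12)^2$, $D'(t)=t\bigl(t^2-(2a-3)\bigr)$, the value at the interior critical points is $(a-\tfrac12)^2-(a-\tfrac32)^2=2a-2$, and $D(0)-(2a-2)=(a-\tfrac32)^2\ge 0$ correctly selects which critical point is the global minimum in each range of $a$, reproducing exactly the two branches of $\mathbf{r}_a$ (which agree at $a=3/2$). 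Two minor remarks: the convexity of $\Omega$ is not actually needed, since the segment-crossing argument (or the identity $\operatorname{dist}(a,\Omega^c)=\operatorname{dist}(a,\partial\Omega)$ for open sets) holds for arbitrary open sets; and your computation in fact shows the radius $\mathbf{r}_a$ is best possible (the disk of any larger radius meets $\Omega^c$), which is more than the stated inclusion requires but is what makes the radius results that invoke this lemma sharp.
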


\begin{lemma}\label{L3}\cite[Lemma 2.2, p.368]{RM.SN.VR}
For $e^{-1}< a <e$, let $\textbf{r}_{a}$ be given by
\[
\textbf{r}_{a} =
     \begin{cases}
            a-e^{-1} ,&  e^{-1}< a \leq \frac{e+e^{-1}}{2}\\
       e-a,   &    \frac{e+e^{-1}}{2}\leq a \leq e.
     \end{cases}
\]
Then $\left\{w:| w-a|<\textbf{r}_{a}\right\}\subseteq\left\{ w:|\log{w}|<1\right\}=\Omega_{e}$.
\end{lemma}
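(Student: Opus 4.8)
The region $\Omega_e=\{w:|\log w|<1\}$ is, writing $w=\rho e^{i\psi}$ with $\rho>0$ and $\psi=\arg w\in(-\pi,\pi]$, exactly $\{\rho e^{i\psi}:(\log\rho)^2+\psi^2<1\}$; it is the image of $\mathbb{D}$ under $\exp$, is symmetric about the real axis, and meets it precisely in the segment $(e^{-1},e)$. Since the center $a$ is real and lies in this segment (and since $r_a<a$ keeps the disk away from the origin, so that the principal branch is single valued on it), the largest disk $\{w:|w-a|<r\}$ contained in $\Omega_e$ has radius equal to the Euclidean distance from $a$ to $\partial\Omega_e$. Thus the plan is to show that this distance equals $\min\{e-a,\;a-e^{-1}\}$, since the crossover $e-a=a-e^{-1}$ occurs exactly at $a=(e+e^{-1})/2$ and yields the two branches of $r_a$ in the statement.

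To compute the distance, I would parametrize the boundary by $w(t)=e^{e^{it}}=e^{\cos t}\bigl(\cos(\sin t)+i\sin(\sin t)\bigr)$, $t\in[0,2\pi)$, and minimize the squared distance
\[
g(t)=|w(t)-a|^2=e^{2\cos t}-2a\,e^{\cos t}\cos(\sin t)+a^2 .
\]
By symmetry it suffices to treat $t\in[0,\pi]$. The two real-axis boundary points arise as $w(0)=e$ and $w(\pi)=e^{-1}$, giving $g(0)=(e-a)^2$ and $g(\pi)=(a-e^{-1})^2$; these are exactly the two candidate squared distances. A direct differentiation, using the identity $\sin t\cos(\sin t)+\cos t\sin(\sin t)=\sin(t+\sin t)$, gives
\[
g'(t)=2e^{\cos t}\bigl(a\sin(t+\sin t)-e^{\cos t}\sin t\bigr),
\]
which vanishes at $t=0$ and $t=\pi$, confirming that the real-axis points are critical.

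The crux, and the step I expect to be the main obstacle, is to verify that the global minimum of $g$ on $[0,\pi]$ is attained at one of the endpoints $t\in\{0,\pi\}$, i.e.\ that no interior boundary point is closer to $a$ than the nearer of $e^{-1}$ and $e$. For $t\in(0,\pi)$ one has $\sin t>0$ and, since $p(t)=t+\sin t$ is strictly increasing with $p(\pi)=\pi$, also $t+\sin t\in(0,\pi)$ so that $\sin(t+\sin t)>0$; hence both terms in the bracket of $g'$ are positive and the sign of $g'$ is governed solely by the comparison of $a\sin(t+\sin t)$ with $e^{\cos t}\sin t$. Analyzing this comparison---examining the local behavior near the endpoints (the bracket behaves like $(2a-e)\,t$ as $t\to0^+$ and like $-e^{-1}(\pi-t)$ as $t\to\pi^-$) and showing the bracket changes sign at most once on $(0,\pi)$---forces $g$ to have no interior local minimum below $\min\{g(0),g(\pi)\}$. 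Once this monotonicity structure is established, the distance equals $\min\{e-a,\;a-e^{-1}\}$, which is $a-e^{-1}$ for $e^{-1}<a\le(e+e^{-1})/2$ and $e-a$ for $(e+e^{-1})/2\le a<e$, proving the stated value of $r_a$ and hence the inclusion $\{w:|w-a|<r_a\}\subseteq\Omega_e$.
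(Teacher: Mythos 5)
Note first that the paper itself does not prove Lemma~\ref{L3} at all; it is quoted from the literature (Mendiratta--Nagpal--Ravichandran), so your attempt has to stand on its own, and it should be measured against the distance-minimization argument given in that source, which is exactly the strategy you adopt. Your setup is correct: $\Omega_e$ is open, connected, symmetric about the real axis and meets it in $(e^{-1},e)$; since $a\in\Omega_e$, the disk $\{w:|w-a|<r\}$ lies in $\Omega_e$ precisely when $r\le\operatorname{dist}(a,\partial\Omega_e)$; the parametrization $w(t)=e^{e^{it}}$, the squared-distance function $g(t)$, its endpoint values $(e-a)^2$ and $(a-e^{-1})^2$, and your formula for $g'(t)$ are all right, as is the observation that the crossover $e-a=a-e^{-1}$ occurs at $a=(e+e^{-1})/2$.

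The genuine gap is the step you yourself flag as the main obstacle: you never prove that the bracket $a\sin(t+\sin t)-e^{\cos t}\sin t$ changes sign at most once on $(0,\pi)$, equivalently that $g$ has no interior local minimum below $\min\{g(0),g(\pi)\}$. The entire lemma hinges on this; without it the identity $\operatorname{dist}(a,\partial\Omega_e)=\min\{e-a,\ a-e^{-1}\}$ is not established, and the local expansions near $t=0$ and $t=\pi$ that you do supply cannot rule out an interior boundary point closer to $a$ than both $e$ and $e^{-1}$. The claim is true, but it is not a routine remark: the natural way to close it is to write the bracket as $\sin(t+\sin t)\bigl(a-\phi(t)\bigr)$ with $\phi(t)=e^{\cos t}\sin t/\sin(t+\sin t)$, and to prove that $\phi$ is strictly increasing on $(0,\pi)$, rising from the limit value $e/2$ at $t=0^+$ to $+\infty$ at $t=\pi^-$. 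Granting that, for $a\le e/2$ the bracket is negative throughout, so $g$ decreases and its minimum is $g(\pi)=(a-e^{-1})^2$, while for $a>e/2$ the bracket is positive then negative, so the minimum of $g$ is attained at an endpoint; in both cases the distance is $\min\{e-a,\ a-e^{-1}\}$ as desired. Establishing this monotonicity of $\phi$ (or an equivalent sign-change bound) is precisely the analytic content of the lemma, and it is the part your proposal leaves unproved.
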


\begin{lemma}\label{L4}\cite[ Lemma 2.2, p. 926]{KS.NK.VR} For $\frac{1}{3}< a <3$, let $\textbf{r}_{a}$ be given by
\[
\textbf{r}_{a} =
\begin{cases}
    a-\frac{1}{3}, & \frac{1}{3}< a \leq \frac{5}{3}  \\
     3- a, & \frac{5}{3}\leq a \leq 3.
\end{cases}
\]
Then $\left\{w:| w- a | <\textbf{r}_{a} \right\}\subseteq \Omega_{c}$, where $\Omega_{c}$ is the region bonded by the cardioid given $\left\{ x+iy :(9x^2+9y^2-18x+5)^2-16(9x^2+9y^2-6x+1)=0\right\}.$
\end{lemma}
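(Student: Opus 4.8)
The plan is to exploit the fact that $\Omega_c$ is the image $\varphi(\mathbb{D})$ of the Ma--Minda function $\varphi(z)=1+\tfrac{4}{3}z+\tfrac{2}{3}z^{2}$, so that its boundary admits the explicit parametrization $w(\theta)=\varphi(e^{i\theta})=1+\tfrac{4}{3}e^{i\theta}+\tfrac{2}{3}e^{2i\theta}$, $\theta\in[-\pi,\pi]$. First I would record the real and imaginary parts $x(\theta)=1+\tfrac{4}{3}\cos\theta+\tfrac{2}{3}\cos 2\theta$ and $y(\theta)=\tfrac{4}{3}\sin\theta+\tfrac{2}{3}\sin 2\theta$, and verify that they satisfy the quartic Cartesian equation defining $\partial\Omega_c$ identically in $\theta$; this confirms that the traced curve $w(\theta)$ is exactly the cardioid in the statement and bounds a region containing $w(0)=3$ (the right-hand tip) and $w(\pm\pi)=\tfrac13$ (the cusp), both on the real axis. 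In particular every real center $a\in(\tfrac13,3)$ is an interior point of $\Omega_c$.

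The core of the argument is the standard principle that, for an interior point $a$, the largest open disk centered at $a$ and contained in $\Omega_c$ has radius equal to the Euclidean distance from $a$ to $\partial\Omega_c$; so it suffices to compute $\min_{\theta}|w(\theta)-a|$ and identify it with $\mathbf{r}_a$. To this end I would expand the squared distance and reduce it to a quadratic in $c=\cos\theta$. Using $|w(\theta)|^{2}=\tfrac{29}{9}+\tfrac{40}{9}\cos\theta+\tfrac{4}{3}\cos 2\theta$ together with $\cos 2\theta=2c^{2}-1$, one obtains
\[
D(c):=|w(\theta)-a|^{2}=\tfrac{8}{3}(1-a)c^{2}+\tfrac{8}{9}(5-3a)c+\bigl(a^{2}-\tfrac{2a}{3}+\tfrac{17}{9}\bigr),\qquad c\in[-1,1].
\]
The two endpoints factor neatly as $D(1)=(3-a)^{2}$ and $D(-1)=(a-\tfrac13)^{2}$, matching the distances from $a$ to the tip $3$ and the cusp $\tfrac13$.

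It then remains to minimize the quadratic $D$ on $[-1,1]$, which I would organize by the sign of its leading coefficient $\tfrac{8}{3}(1-a)$. For $\tfrac13<a<1$ the parabola opens upward with vertex at $c^{*}=(3a-5)/(6(1-a))$, and the elementary inequality $c^{*}\le-1$ holds for every such $a$, so $D$ is increasing on $[-1,1]$ and the minimum is $D(-1)=(a-\tfrac13)^{2}$. For $1\le a<3$ the leading coefficient is nonpositive, so the minimum is attained at an endpoint, and comparing $D(-1)=(a-\tfrac13)^{2}$ with $D(1)=(3-a)^{2}$ gives $D(-1)\le D(1)$ precisely when $a\le\tfrac53$. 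Collecting the cases yields $\min_{\theta}|w(\theta)-a|=a-\tfrac13$ for $\tfrac13<a\le\tfrac53$ and $=3-a$ for $\tfrac53\le a<3$, which is exactly $\mathbf{r}_a$; since the minimizing boundary point lies at distance precisely $\mathbf{r}_a$, the radius is sharp. I expect the main obstacle to be the bookkeeping in this minimization---specifically, verifying rigorously that the interior critical point $c^{*}$ never enters $(-1,1)$ in the upward-opening regime, so that no off-axis boundary point is closer than the cusp, and treating the cusp at $w=\tfrac13$, where $\partial\Omega_c$ is non-smooth, carefully enough to be certain that the computed quantity is genuinely the distance from $a$ to the boundary of the region.
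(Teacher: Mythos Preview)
This lemma is not proved in the present paper; it is quoted from \cite{KS.NK.VR} and invoked as a black box in parts (5) of Theorems~\ref{th1} and~\ref{th2}, so there is no in-paper argument against which to compare your proposal.

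That said, your plan is correct and complete. The map $\varphi(z)=1+\tfrac{4}{3}z+\tfrac{2}{3}z^{2}$ is univalent on $\mathbb{D}$ (if $\varphi(z_1)=\varphi(z_2)$ with $z_1\neq z_2$ then $z_1+z_2=-2$, impossible in $\mathbb{D}$), so $\partial\Omega_c$ is precisely the curve $w(\theta)=\varphi(e^{i\theta})$ you parametrize. Your reduction of $|w(\theta)-a|^{2}$ to the quadratic $D(c)$ in $c=\cos\theta$ is accurate, the endpoint evaluations $D(1)=(3-a)^{2}$ and $D(-1)=(a-\tfrac13)^{2}$ are correct, and the key vertex check $c^{*}=(3a-5)/(6(1-a))\le -1$ for $\tfrac13<a<1$ is equivalent to $a\ge\tfrac13$, exactly as you claim. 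The case split at $a=\tfrac53$ then follows from comparing the two endpoint values. The interior-point issue you flag is handled by observing that $\varphi$ maps the real segment $(-1,1)$ strictly monotonically onto $(\tfrac13,3)$, so every such $a$ lies in $\Omega_c=\varphi(\mathbb{D})$; the cusp at $\tfrac13$ causes no trouble because the distance-to-boundary computation only requires $\partial\Omega_c$ to be closed, not smooth.
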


\begin{lemma}\label{L5} \cite[Lemma 3.3, p.7]{NE.VK.SS}
For $1-\sin 1< a <1+\sin 1$, let $r_{a}=\sin 1- | a-1|$. Then $\left\{ w:|\omega- a |<\textbf{r}_{a}\right\}\subseteq \Omega_{s}$; $\Omega_{s}$ is the image of the unit disk $\mathbb{D}$ under $1+\sin z$.
\end{lemma}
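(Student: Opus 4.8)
The plan is to interpret $r_a$ as the distance from the real centre $a$ to $\partial\Omega_s$ and to show that the nearest boundary points are the two real-axis intercepts $1\pm\sin 1$. First note that $\varphi(z)=1+\sin z$ has $\varphi'(z)=\cos z\neq 0$ on $\overline{\mathbb D}$, since the zeros of $\cos$ satisfy $|z|\ge\pi/2>1$; hence $\varphi$ is univalent on a neighbourhood of $\overline{\mathbb D}$ and $\Omega_s$ is a Jordan domain with $\partial\Omega_s=\{\varphi(e^{i\theta}):0\le\theta\le 2\pi\}$. Because $\varphi(\bar z)=\overline{\varphi(z)}$, the domain $\Omega_s$ is symmetric about the real axis and contains the segment $(1-\sin 1,1+\sin 1)$, so $a\in\Omega_s$. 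For an open set the largest disk $\{|w-a|<r\}$ it contains has radius $\operatorname{dist}(a,\partial\Omega_s)$; thus it suffices to prove $\min_{\theta}|a-\varphi(e^{i\theta})|=r_a$, after which the connectedness of the disk together with $a\in\Omega_s$ yields the stated containment.

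Writing $\sin(e^{i\theta})=\sin(\cos\theta)\cosh(\sin\theta)+i\cos(\cos\theta)\sinh(\sin\theta)$ and putting $b=a-1$ (so $|b|<\sin 1$ and $r_a=\sin 1-|b|$), the identity $\sin^2 x\cosh^2 y+\cos^2 x\sinh^2 y=\sin^2 x+\sinh^2 y$ gives
\begin{equation*}
|a-\varphi(e^{i\theta})|^2=\sin^2(\cos\theta)+\sinh^2(\sin\theta)-2b\,\sin(\cos\theta)\cosh(\sin\theta)+b^2 .
\end{equation*}
The reflection $\theta\mapsto\pi-\theta$ fixes $\sin^2(\cos\theta)+\sinh^2(\sin\theta)$ and negates $\sin(\cos\theta)\cosh(\sin\theta)$, so it swaps the cases $b\ge 0$ and $b\le 0$; hence I may assume $b\ge 0$. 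Since $r_a^2=\sin^2 1-2b\sin 1+b^2$, the claim $|a-\varphi(e^{i\theta})|^2\ge r_a^2$ becomes
\begin{equation*}
\Phi(\theta,b):=\bigl(\sin^2(\cos\theta)+\sinh^2(\sin\theta)-\sin^2 1\bigr)-2b\bigl(\sin(\cos\theta)\cosh(\sin\theta)-\sin 1\bigr)\ge 0 .
\end{equation*}
The decisive observation is that $\Phi(\theta,\cdot)$ is affine in $b$, so for each fixed $\theta$ its minimum over $b\in[0,\sin 1]$ is attained at an endpoint; it therefore suffices to verify the two boundary cases. At $b=\sin 1$ one has $\Phi(\theta,\sin 1)=|\sin(e^{i\theta})-\sin 1|^2\ge 0$ trivially, while at $b=0$ the inequality is exactly $\sin^2(\cos\theta)+\sinh^2(\sin\theta)\ge\sin^2 1$, that is, $\min_{|z|=1}|\sin z|=\sin 1$.

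The main obstacle is thus the single remaining estimate $\min_{|z|=1}|\sin z|=\sin 1$. I would prove it by setting $g(\theta)=\sin^2(\cos\theta)+\sinh^2(\sin\theta)$ and showing $g$ is nondecreasing on $[0,\pi/2]$; the symmetries $g(\pi-\theta)=g(\theta)$ and $g(-\theta)=g(\theta)$ then reduce all $\theta$ to this interval and give the minimum $g(0)=\sin^2 1$. Differentiating,
\begin{equation*}
g'(\theta)=\cos\theta\,\sinh(2\sin\theta)-\sin\theta\,\sin(2\cos\theta),
\end{equation*}
and on $[0,\pi/2]$ the elementary bounds $\sinh u\ge u$ and $\sin u\le u$ (for $u\ge 0$) give $\cos\theta\,\sinh(2\sin\theta)\ge\sin(2\theta)\ge\sin\theta\,\sin(2\cos\theta)$, whence $g'(\theta)\ge 0$. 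This closes the chain; the only genuinely analytic input is the monotonicity of $g$, everything else being the affine-in-$b$ reduction and the trivial modulus bound at $b=\sin 1$.
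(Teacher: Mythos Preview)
The paper does not prove this lemma at all; it merely quotes it from Cho, Kumar, Kumar and Ravichandran (reference \cite{NE.VK.SS}). Your proposal is therefore an independent proof, and the core of it is correct and rather clean: the affine-in-$b$ reduction to the two endpoint cases $b=0$ and $b=\sin 1$, together with the monotonicity of $g(\theta)=\sin^2(\cos\theta)+\sinh^2(\sin\theta)$ via $\sinh u\ge u\ge\sin u$, is a sound and efficient route to the distance estimate $\operatorname{dist}(a,\varphi(\partial\mathbb D))\ge r_a$.

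There is, however, one logical slip. You infer univalence of $\varphi(z)=1+\sin z$ on a neighbourhood of $\overline{\mathbb D}$ from $\varphi'(z)=\cos z\neq 0$ there; non-vanishing of the derivative gives only local univalence, not global. The conclusion is in fact true (if $\sin z_1=\sin z_2$ then either $z_1-z_2\in 2\pi\mathbb Z$ or $z_1+z_2\in\pi+2\pi\mathbb Z$, and both alternatives force $z_1=z_2$ when $|z_1|,|z_2|<\pi/2$), but more to the point your argument does not actually need it. For any non-constant analytic $\varphi$, the boundary of $\Omega_s=\varphi(\mathbb D)$ satisfies $\partial\Omega_s\subseteq\varphi(\partial\mathbb D)$, so $\operatorname{dist}(a,\partial\Omega_s)\ge\min_\theta|a-\varphi(e^{i\theta})|$; and once $a\in\Omega_s$ (which you verify via $\varphi((-1,1))=(1-\sin 1,1+\sin 1)$), any disk about $a$ avoiding $\partial\Omega_s$ lies in $\Omega_s$ by connectedness. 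With that small repair your proof stands.
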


\begin{lemma}\label{L6}\cite[ Lemma 2.1, p. 3]{SG.VR}. For $\sqrt{2}-1< a <\sqrt{2}+1$, let $\textbf{r}_{a}=1-|\sqrt{2}- a  |$. Then
\[\left\{w:| w- a | < \textbf{r}_{a} \right\} \subseteq \left\{w:| w^2-1| < 2 | w | \right\}.\]
\end{lemma}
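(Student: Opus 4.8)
The plan is to realize the target region $\Omega_{\leftmoon}=\{w:|w^2-1|<2|w|\}$ as a \emph{lune} bounded by two circular arcs, after which the disk inclusion collapses to two elementary distance inequalities. Writing $w=x+iy$ and squaring the defining inequality, $(x^2-y^2-1)^2+4x^2y^2<4(x^2+y^2)$, I would collect terms to obtain the clean form
\[
(x^2+y^2-1)^2<4x^2,\qquad\text{i.e.}\qquad |x^2+y^2-1|<2|x|.
\]
On the right half-plane $x>0$ this splits as $x^2+y^2-1<2x$ together with $x^2+y^2-1>-2x$, that is, the points lying \emph{inside} the circle $C_1:(x-1)^2+y^2=2$ and \emph{outside} the circle $C_2:(x+1)^2+y^2=2$. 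Both circles have radius $\sqrt2$, are centered at $(\pm1,0)$, meet at $(0,\pm1)$, and cross the positive real axis at $\sqrt2+1$ and $\sqrt2-1$ respectively.

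Since $D_a=\{w:|w-a|<\mathbf{r}_{a}\}$ is centered at the real point $a$, I would prove $D_a\subseteq\Omega_{\leftmoon}$ through the two conditions: (i) $D_a$ lies inside $C_1$, equivalently $|a-1|+\mathbf{r}_{a}\le\sqrt2$; and (ii) $D_a$ lies outside $C_2$, equivalently $(a+1)-\mathbf{r}_{a}\ge\sqrt2$. Both merely assert that the distance from $a$ to the relevant circle-center, corrected by $\mathbf{r}_{a}$, stays on the correct side of $\sqrt2$: the first bounds the farthest point of $D_a$ from $(1,0)$, the second the nearest point of $D_a$ to $(-1,0)$. A preliminary check that $a-\mathbf{r}_{a}\ge\sqrt2-1>0$, hence $D_a\subseteq\{x>0\}$, lets me legitimately invoke the right half-plane description above.

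The verification with $\mathbf{r}_{a}=1-|\sqrt2-a|$ splits naturally at $a=\sqrt2$. For $a\ge\sqrt2$ one has $\mathbf{r}_{a}=1+\sqrt2-a$, so (i) holds with equality — $D_a$ is internally tangent to $C_1$ at $\sqrt2+1$ — while (ii) reduces to $a\ge\sqrt2$. For $a\le\sqrt2$ one has $\mathbf{r}_{a}=1-\sqrt2+a$, so (ii) holds with equality — external tangency to $C_2$ at $\sqrt2-1$ — while (i) reduces to $a\le\sqrt2$ after separating the sub-cases $a\le1$ and $a\ge1$ to resolve $|a-1|$. The two tangencies simultaneously show that $\mathbf{r}_{a}$ is exactly the distance from $a$ to $\partial\Omega_{\leftmoon}$, so the radius is best possible.

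The main obstacle is conceptual rather than computational: one must correctly identify $\Omega_{\leftmoon}$ as (inside $C_1$)$\,\cap\,$(outside $C_2$), and in particular recognize that the inner boundary is carried by a circle whose disk the center $a$ lies \emph{outside} of — so the governing contact in (ii) is an external tangency, not an internal one. One should also confirm that the two extremal contact points $\sqrt2\pm1$ lie on the genuine bounding arcs, rather than on the complementary portions of the full circles $C_1,C_2$. Once the region is resolved into these two arcs, the case analysis above is entirely routine.
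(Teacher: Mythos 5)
Your proof is correct, but there is nothing internal to compare it against: the paper does not prove Lemma~\ref{L6} at all, it simply imports it by citation from Gandhi and Ravichandran \cite{SG.VR} and then uses it as a black box in parts (7) of both theorems. Your argument therefore supplies a proof the paper omits, and it checks out in every step. The key identity is right: with $w=x+iy$, one has $|w^2-1|^2-4|w|^2=(x^2+y^2)^2+1-6x^2-2y^2=(x^2+y^2-1)^2-4x^2$, so the lune is exactly $\{|x^2+y^2-1|<2|x|\}$, and on $\{x>0\}$ this is (inside $C_1$) $\cap$ (outside $C_2$) with $C_{1,2}$ of radius $\sqrt2$ centered at $(\pm1,0)$. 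Your preliminary bound $a-\mathbf{r}_a\ge\sqrt2-1>0$ legitimizes restricting to the right half-plane, and the two triangle-inequality conditions $|a-1|+\mathbf{r}_a\le\sqrt2$ and $(a+1)-\mathbf{r}_a\ge\sqrt2$ do follow, with equality in the first when $a\ge\sqrt2$ and in the second when $a\le\sqrt2$, exactly as you say (the only loose phrasing is that for $a\le1$ condition (i) becomes the unconditional $2-\sqrt2\le\sqrt2$ rather than ``reducing to $a\le\sqrt2$''; harmless). Your tangency observation additionally shows $\mathbf{r}_a$ is the largest admissible radius, i.e.\ the distance from $a$ to $\partial\Omega_{\leftmoon}$, which is more than the lemma asserts and is what makes the radius values in the theorems sharp. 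In short: a correct, self-contained, elementary proof of a statement the paper only quotes.
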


\begin{lemma}\label{L7} \cite[ Lemma 2.2, p. 202]{SK.VR} For $2(\sqrt{2}-1) < a <2$, let $\textbf{r}_{a}$ be given by
\[
\textbf{r}_{a} =
     \begin{cases}
           a-2\left(\sqrt{2}-1\right) , & 2\left(\sqrt{2}-1\right)< a \leq \sqrt{2}\\
       2-a ,& \sqrt{2}\leq a \leq 2.
     \end{cases}
\]
Then $\left\{w:| w- a  | < \textbf{r}_{a}  \right\}$, where $\Omega_{r}$ is the image of the disk $\mathbb{D}$ under the function $1+(zk+z^2)/\left(k^2-kz\right)$, $k=\sqrt{2}+1$.
\end{lemma}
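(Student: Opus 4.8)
The plan is to identify $\mathbf{r}_a$ with the Euclidean distance from the center $a$ to the boundary of $\Omega_r$ and to locate the nearest boundary point explicitly. Write $\varphi_R(z) = 1 + (zk+z^2)/(k^2-kz)$ for the defining map, so that $\Omega_r = \varphi_R(\mathbb{D})$. Because $\varphi_R$ has real Taylor coefficients, $\Omega_r$ is symmetric about the real axis, and since $\varphi_R$ is a univalent (Ma--Minda) map, $\Omega_r$ is a bounded Jordan domain. Consequently the largest open disk centered at a real point $a$ that is contained in $\Omega_r$ has radius exactly $\operatorname{dist}(a,\partial\Omega_r) = \min_{\theta}|\varphi_R(e^{i\theta}) - a|$, so it suffices to minimize this distance and match it to $\mathbf{r}_a$.

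First I would parametrize $\partial\Omega_r$ by $z = e^{i\theta}$. The decisive simplification is that $k=\sqrt{2}+1$ satisfies $k^2 = 2k+1$, hence $k^2-1 = 2k$; substituting and using double-angle identities collapses the rational expression to
\[ \operatorname{Re}\varphi_R(e^{i\theta}) = 1 + \frac{\cos^2\theta + \cos\theta - 1}{k+1-k\cos\theta}, \qquad \operatorname{Im}\varphi_R(e^{i\theta}) = \frac{\sin\theta\,(1+\cos\theta)}{k+1-k\cos\theta}. \]
Evaluating at $\theta = 0$ and $\theta = \pi$ gives the two points where $\partial\Omega_r$ meets the real axis, namely $\varphi_R(1) = 2$ and $\varphi_R(-1) = 2(\sqrt{2}-1)$; these are the right and left extremes of $\Omega_r \cap \mathbb{R}$, and they are the candidates for the nearest boundary point.

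Next I would set $\rho(\theta) = |\varphi_R(e^{i\theta}) - a|^2$ and rewrite it as a rational function of $c = \cos\theta \in [-1,1]$, the denominator $k+1-kc$ being strictly positive there. The goal is to show that on $[-1,1]$ the function $\rho$ attains its minimum only at $c = \pm 1$. Concretely, I would verify the two inequalities $\rho \ge (a - 2(\sqrt{2}-1))^2$ for $a \le \sqrt 2$ and $\rho \ge (2-a)^2$ for $a \ge \sqrt 2$ by clearing the positive denominator and factoring the resulting polynomial in $c$ so that the binding endpoint ($c=-1$ or $c=1$) appears as a double root and the remaining cofactor is manifestly nonnegative on $[-1,1]$. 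This pins the nearest boundary point to one of the two real extremes.

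Finally I would compare the two endpoint distances. Since $\sqrt 2$ is the midpoint of the interval $[2(\sqrt 2-1),\,2]$, the inequality $a - 2(\sqrt 2-1) \le 2 - a$ holds precisely when $a \le \sqrt 2$; taking the smaller of the two distances then yields $\mathbf{r}_a$ exactly as stated, with the case split occurring at $a=\sqrt 2$. The main obstacle is the uniform-in-$a$ polynomial inequality of the third step: because $\varphi_R$ is of degree two over degree one, $\partial\Omega_r$ is not a circle and no M\"obius shortcut is available, so one must control the sign of $\rho'(\theta)$ (equivalently, carry out the factorization of the cleared polynomial) carefully enough to rule out interior critical points dipping below the endpoint values.
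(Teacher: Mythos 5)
You should first note what you are being measured against: the paper never proves this lemma; it is imported (with a typo---the conclusion should read $\{w : |w-a|<\mathbf{r}_a\}\subseteq\Omega_r$) from the cited source \cite{SK.VR}, so your proposal stands or falls on its own correctness rather than on agreement with an in-paper argument. Your framework is sound, and every computation you actually display is right. The reduction of the containment to the inequality $\min_\theta|\varphi_R(e^{i\theta})-a|\geq \mathbf{r}_a$ is valid, though it needs neither univalence nor the Jordan property: for any open set, the disk about a point $a\in\Omega_r$ of radius $\operatorname{dist}(a,\partial\Omega_r)$ lies in $\Omega_r$, and $\partial\Omega_r\subseteq\varphi_R(\partial\mathbb{D})$ because $\varphi_R$ is analytic across $\overline{\mathbb{D}}$ (its pole $z=k$ lies outside); what you do need, and should state explicitly, is $a\in\Omega_r$, which follows since $\varphi_R$ is real-valued and continuous on $[-1,1]$ with $\varphi_R(1)=2$ and $\varphi_R(-1)=2(\sqrt{2}-1)$. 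Your parametrization (via $k^2-1=2k$, equivalently $k+1=\sqrt{2}\,k$), the endpoint values, and the midpoint observation that $a-2(\sqrt{2}-1)\leq 2-a$ exactly when $a\leq\sqrt{2}$ are all correct.

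The gap is your third step, which is the entire content of the lemma, and the mechanism you predict for it is wrong, so the computation as you plan it would stall. Write $c=\cos\theta$, $D=k+1-kc=k(\sqrt{2}-c)$, $m=\mathbf{r}_a$, and $P(c)=D^2\bigl(|\varphi_R(e^{i\theta})-a|^2-m^2\bigr)$; this is a cubic, not a quartic, since the degree-four terms cancel. The binding endpoint is a \emph{simple} root of $P$, not a double one: the substitution $c=\cos\theta$ already absorbs the symmetry $\theta\mapsto-\theta$ that makes $\theta=0,\pi$ critical points of the distance, so there is no reason for $dP/dc$ to vanish at $c=\pm1$, and it does not --- for $a\leq\sqrt{2}$ one finds $P(-1)=0$ but $P'(-1)=2\sqrt{2}\,k^2m>0$. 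What actually happens, and what rescues your approach once the double-root expectation is dropped, is a factorization with the endpoint factor appearing to the first power and a cofactor whose nonnegativity is tied precisely to the case split: for $a\leq\sqrt{2}$,
\[
P(c)=2(1+c)(\sqrt{2}-c)\left[\frac{1+c}{k}+m(1-kc)\right],
\]
and the bracket, being linear in $c$ with value $m(1+k)>0$ at $c=-1$ and value $\tfrac{2}{k}-\sqrt{2}\,m$ at $c=1$, is nonnegative on $[-1,1]$ exactly when $m\leq 2-\sqrt{2}$, i.e.\ exactly when $a\leq\sqrt{2}$; symmetrically, for $a\geq\sqrt{2}$ one gets $P(c)=(1-c)Q(c)$ with $Q$ quadratic, pointwise decreasing in $m=2-a$, and equal to $2(1+c)(\sqrt{2}-c)$ at the extreme $m=2-\sqrt{2}$. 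So your route does lead to a complete and clean proof, but in the proposal the decisive inequality is left unproved, and the structural guess about how it would be proved is false as stated.
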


\begin{proof}[\bf \em Proof of Theorem~\ref{th1}]
Let the function $f\in \Pi_{1}$. Then there is a function $g:\mathbb{D}\rightarrow \mathbb{C}$   satisfying
\begin{align}\label{Z1}
{\rm {Re}}\left(\frac{f(z)}{g(z)}\right)>0 \quad  \text{and} \quad { \operatorname{Re} }\left(\frac{(1-z)^2g(z)}{z}\right)\quad \forall z\in \mathbb{D}.
\end{align}
 Define functions $p_{1}, p_{2} :\mathbb{D} \rightarrow \mathbb{C}$ as the following.
\begin{align}\label{Z2}
p_{1}(z)=\frac{(1-z)^2g(z)}{z} \quad \text{and}\quad  p_{2}(z)=\frac{f(z)}{g(z)}.
\end{align}
By using (\ref{Z1}) and (\ref{Z2}), we have  $p_{1}, p_{2}\in \mathcal{P}$, and $f(z)=zp_{1}(z)p_{2}(z)/(1-z)^2$. Then it follows that
\begin{align}\label{A4}
\frac{zf'(z)}{f(z)}=\frac{zp_{1}'(z)}{p_{2}(z)}+\frac{zp_{2}'(z)}{p_{2}(z)}+\frac{1+z}{1-z}.
\end{align}
The bilinear transformation $(1+z)/(1-z)$ maps the disk $|z|\leq r$ onto the disk
\begin{align}\label{A5}
\left|\frac{1+z}{1-z}-\frac{ 1-r^2 }{1+r^2}\right|\leq  \frac{2r}{1-r^2}.
\end{align}
 For $p\in\mathcal{P}(\alpha)$, we have
\begin{align}\label{A6}
\left|\frac{z p'(z)}{p(z)}\right|\leq\frac{2(1-\alpha)r}{(1-r)(1+(1-2\alpha)r)}, \quad  |z|\leq r.
\end{align}
By using (\ref{A4}), (\ref{A5}) and (\ref{A6}), function $f$ maps disk $|z|\leq r$ onto disk
\begin{align}\label{A}
\left| \frac{zf'(z)}{f(z)}-\frac{1+r^2}{1-r^2}\right| \leq \frac{6r}{1-r^2}.
\end{align}
From (\ref{A}), it follows that
  \begin{align}\label{Z3}
\operatorname{Re}\frac{zf'(z)}{f(z)}\geq\frac{1-6r+r^2 }{1-r^2}\geq 0,
\end{align}
for all $0\leq r \leq 3-2\sqrt{2}$.
Therefore, the function $f\in \Pi_{1}$ is starlike in $|z|\leq  3-2\sqrt{2}\approx 0.171573$. Hence, all our radii found here must be less than $3-2\sqrt{2}$.
  \begin{enumerate}[labelindent=12pt,itemindent=0em,leftmargin=!]
    \item The number $\rho= R_{\mathcal{S}^*}{(\alpha)}$, is the smallest positive root of the equation $(1+\alpha)r^2-6r+1-\alpha=0$ in $[0,1]$. For $0<r\leq \rm R_{\mathcal{S}^*(\alpha)}$, from (\ref{Z3}), it follows that
    \[\operatorname{Re}\frac{zf'(z)}{f(z)}\geq \frac{1-6r+r^2}{1-r^2}
    \geq \frac{1-6\rho+\rho^2}{1-\rho^2} = \alpha.\]
This shows that the radius of starlikeness of order $\alpha$ is at least $ R_{\mathcal{S}^*}{(\alpha)}$. To show that it is sharp, consider the function $f_0\in \Pi_{1}$ given in (\ref{Z4}). For this function $f_0$, we have
\[  \frac{zf_0'(z)}{f_0(z)} =\frac{1+6z+z^2}{1-z^2}.\]At   $z=-\rho$, we have
\[ \operatorname{Re} \frac{zf_0'(z)}{f_0(z)} =\frac{1-6\rho+\rho^2}{1-\rho^2}=\alpha,\]   proving the sharpness of the radius.

\item We can give a proof using Lemma~\ref{L1} but we give a different proof here. The number  $\rho:=R_{\mathcal{S}_{L}}$ is the smallest positive root of the equation $(1+\sqrt{2})r^2+6r+1-\sqrt{2} =0$  in interval $(0,1)$, and, from (\ref{A}), it is clear that, for $0\leq r\leq\rho$,
\begin{align}\label{D}
\left| \frac{zf'(z)}{f(z)}-1\right| \leq  \left|\frac{zf'(z)}{f(z)}-\frac{1+r^2}{1-r^2} \right|+\frac{2r^2}{1-r^2} \leq \frac{6r+2r^2}{1-r^2}\leq \frac{6\rho+\rho^2}{1-\rho^2}= \sqrt{2}-1
\end{align}
and
\begin{align}\label{E}
\left|\frac{zf'(z)}{f(z)}+1\right|\leq 2+\left|\frac{zf'(z)}{f(z)}-1\right|\leq \sqrt{2}+1.
\end{align}
Thus, from (\ref{D}) and (\ref{E}), it follows that,  for $0\leq r\leq\rho$,
\[\left| \left(\frac{zf'(z)}{f(z)}\right)^2-1\right|=\left|\frac{zf'(z)}{f(z)}-1\right|\left|\frac{zf'(z)}{f(z)}+1\right|\leq(\sqrt{2}+1)(\sqrt{2}-1)= 1.\]

For the function $f_0\in \Pi_{1}$ given in (\ref{Z4}), we have, at $z=\rho$,
\[  \frac{zf_0'(z)}{f_0(z)} = 1+\frac{6\rho+2\rho^2}{1-\rho^2}=\sqrt{2}\]
and so,   at $z=\rho$,
\[\left| \left(\frac{zf_0'(z)}{f_0(z)}\right)^2-1\right|=  1.\]
This proves the sharpness.

\item For $ \rho:=R_{ \mathcal{S}_{P}}=(6-\sqrt{33})/3$, we have
\[\frac{1}{2}< 1\leq a=\frac{1+r^2}{1-r^2} \leq\frac{1+\rho^2}{1-\rho^2}=
\frac{3 \sqrt{33}-1}{16}\approx 1.0146 <3/2.\]
Also, for $ \rho= R_{\mathcal{S}_{P}}$, we have
\[ \frac{6\rho}{(1-\rho^2)}\leq\frac{1+\rho^2}{1-\rho^2}-\frac{1}{2}\] and
the disk in (\ref{A}) for $r=\rho$ becomes
\[\left| \frac{zf'(z)}{f(z)}-a\right|=\left| \frac{zf'(z)}{f(z)}-\frac{1+\rho^2}{1-\rho^2}\right| \leq \frac{1+\rho^2}{1-\rho^2}-\frac{1}{2}=a-\frac{1}{2}.\]
By  Lemma $\ref{L2}$,  it follows that the disk in (\ref{A}) lies inside region $\Omega_{PAR}$. This proves that
the radius of parabolic starlikeness is at least $ R_{\mathcal{S}_{P}}$.

The radius  is sharp for the function $f_0\in\Pi_{1}$. At the point $z=-\rho=-R_{ \mathcal{S}_{P}}$, we have
\[\operatorname{Re}\left(\frac{zf_0'(z)}{f_0(z)}\right)=\frac{1-6\rho+\rho^2}{1-\rho^2}= \frac{1}{2}=\frac{ 6\rho-2\rho^2}{1-\rho^2}=\left|\frac{zf_0'(z)}{f_0(z)}-1\right|.\]

\item  For  $e^{-1}< a \leq \frac{e+e^{-1}}{2}$, Lemma \ref{L3} gives
\begin{align}\label{F0}
\left\{w\in\mathbb{C}:| w- a |< a -e^{-1} \right\}\subseteq \left\{w\in \mathbb{C}:| \log{w} | <1   \right\}=:\Omega_{e},
\end{align}
For $\rho= R_{\mathcal{S}^*_{e}}$, we have \[e ^{-1}< a:= \frac{1+\rho^2}{1-\rho^2}=\frac{1+9e^2}{e(1+3\sqrt{1+8e^2})} \approx 1.0236\leq   \frac{e+e^{-1}}{2}\approx 1.5430\] and, $\rho$ being smallest positive   root of the equation $(1+e)r^2-6 e r +e-1=0$,
\[\frac{6\rho}{1-\rho^2}\leq\frac{1+\rho^2}{1-\rho^2}-\frac{1}{e}= a-e^{-1}.\]Consequently,
the disk in (\ref{A}) for $r=\rho$ becomes \[
\left| \frac{zf'(z)}{f(z)}-a\right|=
\left| \frac{zf'(z)}{f(z)}-\frac{1+\rho^2}{1-\rho^2}\right| \leq \frac{1+\rho^2}{1-\rho^2}-\frac{1}{e}=a-e^{-1}.\]
By (\ref{F0}) the above disk   is inside $\Omega_{e}$ proving that the $\mathcal{S}^*_{e}$ radius for the class $\Pi_{1}$ is at least $R_{\mathcal{S}^*_{e}}$. The result is sharp for the function $f_0$ given in (\ref{Z4}). Indeed, at $z=-\rho$ where $\rho=R_{ \mathcal{S}_{e}^{*}}$, we have
\[\left|\log \left(\frac{zf_0'(z)}{f_0(z)}\right)\right|
=\left|\log\left(\frac{1-6\rho+\rho^2}{1-\rho^2}\right)\right|=1.\]

\item For $\frac{1}{3}< a\leq \frac{5}{3}$, by an application of Lemma \ref{L4}, it follows that
\begin{align}\label{F1}
\left\{w\in\mathbb{C}:| w-a |<a-\frac{1}{3}\right\}\subseteq\Omega_{c},
\end{align}
where $\Omega_{c}$ is the domain bounded by the cardioid
$\{x+iy:(9x^2+9y^2-18x+5)^2$ $-16(9x^2+9y^2-6x+1)=0\}$.   For
$\rho=R_{\mathcal{S}_{c}^{*}}$, we have
\[ \frac{1}{3}< a:=\frac{1+\rho^2}{1-\rho^2}=\frac{3\sqrt{73}-1}{24}\approx 1.0263\leq \frac{5}{3}\] and, $\rho$ being the samllest positive   root of the equation $ 2r^2-9r+1=0$,
\[ \frac{6\rho}{1-\rho^2} = \frac{1+\rho^2}{1-\rho^2}-\frac{1}{3}.\]
 Therefore, the disk in (\ref{A}) becomes
 \[
 \left| \frac{zf'(z)}{f(z)}-a\right|=
 \left| \frac{zf'(z)}{f(z)}-\frac{1+\rho^2}{1-\rho^2}\right| \leq
  = \frac{1+\rho^2}{1-\rho^2}-\frac{1}{3}=a-\frac{1}{3}
  \]
and this disk is inside $\Omega_{c}$. This shows that $\mathcal{S}_{c}^{*}$ radius is at least  $R_{\mathcal{S}_{c}^{*}}$.

For the function $f_{0}$ given in (\ref{Z4}), at $z=\rho=R_{ \mathcal{S}_{c}^{*}}$, we have
\[ \frac{zf_0'(z)}{f_0(z)}= \frac{1-6\rho+\rho^2}{1-\rho^2} =\frac{1}{3}=\varphi_c(-1)
\in\partial \varphi_c(\mathbb{D})\]
where $\varphi_c(z)=1+4z/3+2z^2/3$.

\item For   $\rho=R_{ \mathcal{S}_{\sin}^{*}}$, and $a:=(1+r^2)/(1-r^2)$, we have
    \[|a-1|=\frac{2\rho^2}{1-\rho^2}\approx 0.13199 <\sin 1\approx 0.8414. \]
and
\[\frac{6\rho}{1-\rho^2}\leq \sin 1-\frac{2\rho^2}{1-\rho^2}. \]
The   disk in (\ref{A}) for $r=\rho$ becomes
\[\left|\frac{zf'(z)}{f(z)} -a\right| =\left|\frac{zf'(z)}{f(z)} - \frac{1+\rho^2}{1-\rho^2} \right|
\leq \sin 1-\frac{2\rho^2}{1-\rho^2}=\sin 1-|1-a|.\]
Lemma $\ref{L5}$ shows that   the disk in (\ref{A}) is inside $\Omega_{s}$
where $\Omega_{s}=:\varphi_s(\mathbb{D})$ is the image of the unit disk $\mathbb{D}$ under the mapping $\varphi_s(z)=1+\sin z$. This proves that the
$\mathcal{S}_{\sin}^{*}$ radius is at least $R_{ \mathcal{S}_{\sin}^{*}}$.
For the function $f_{0}$   given in (\ref{Z4}), with $\rho=R_{ \mathcal{S}_{\sin}^{*}}$, we have
\[  \left(\frac{zf'(z)}{f(z)}\right) =  \frac{1+6\rho+\rho^2}{1-\rho^2} =1+\sin 1\in\varphi_s(1)\in \partial \varphi_s(\mathbb{D}).\]

\item  For $\rho= R_{\mathcal{S}_{\leftmoon}^{*}}$, we have
\[a:=\frac{1+\rho^2}{1-\rho^2} \approx 1.0202 \in (\sqrt{2}-1,\sqrt{2}+1)\]
and
\[\frac{1-6\rho+\rho^2}{1-\rho^2} =\sqrt{2}-1.\]
The disk in (\ref{A}) becomes
\[ \left|\frac{zf'(z)}{f(z)} -a \right|\leq 1-| \sqrt{2}-a | \]
 and by Lemma~\ref{L6} it  lies inside $\left\{w : | w^2-1|<2| w |\right\}$.
This shows that $\mathcal{S}_{\leftmoon}^{*}$ radius is at least  $R_{\mathcal{S}_{\leftmoon}^{*}}$. The sharpness follows as  the function $f_{0}$ defined in (\ref{Z4}) satisfies, at  $z=\rho=R_{ \mathcal{S}_{\leftmoon}^{*}}$,
\[
\begin{split}  \left|\left(\frac{zf_0'(z)}{f_0(z)}\right)^2-1\right|&=
\left|\left(\frac{1-6\rho+\rho^2}{1-\rho^2}\right)^2-1\right|=2(\sqrt{2}-1)\\
& =2  \frac{1-6\rho+\rho^2}{1-\rho^2}
=2 \left|\frac{zf'_0(z)}{f_0(z)} \right|.
\end{split} \]

\item For $\rho=R_{\mathcal{S}^*_{R}}$, we have
\[ 2 ( \sqrt{2}-1 )< a:=\frac{1+\rho^2}{1-\rho^2}\approx 1.00167 \leq \sqrt{2}<2,
\] and
\[\frac{1-6\rho+\rho^2}{1-\rho^2}= 2-2\sqrt{2}.\] The disk
 (\ref{A}) becomes
\begin{align}\label{F4}
\left|\frac{zf'(z)}{f(z)}-a\right|< a-2 ( \sqrt{2}-1).
\end{align}
By Lemma $\ref{L7}$, this disk lies inside  the domain $\Omega_{r}$. This proves that $ \mathcal{S}_{R}^{*} $ radius  is at least $R_{ \mathcal{S}_{R}^{*}}$.

To prove sharpness, consider the function $f_{0}\in \Pi_{1}$ given in (\ref{Z4}). At $z=-\rho=-R_{ \mathcal{S}_{R}^{*}}$, we have
 \[ \frac{zf'(z)}{f(z)} =  \frac{1-6\rho+\rho^2}{1-\rho^2}
 =2(\sqrt{2}-1)=\varphi_r(-1)\in\partial\varphi_r(\mathbb{D})\]
where $\varphi_r(z)=1+(kz+z^2)/(k^2-kz)$, $k=\sqrt{2}+1$. \qedhere
\end{enumerate}
\end{proof}

\begin{proof}[\bf \em Proof of Theorem~\ref{th2}]
Since $|w-1|<1$ is equivalent to $\operatorname{Re} (1/w)>1/2$, the condition  $| f(z)/g(z) -1|<1$ is the same as the condition ${\operatorname{Re}  }\left(g(z)/f(z) \right)>1/2$. Let the function $f \in \Pi_{2}$. Let  $g:\mathbb{D}\rightarrow\mathbb{C}$ be chosen such that \begin{align}\label{A1}
\operatorname{Re}\left(\frac{g(z)}{f(z)} \right)>\frac{1}{2} \quad  \text{and} \quad  \operatorname{Re}\left(\frac{(1-z)^2}{z}g(z) \right).
\end{align}
Define $p_{1},p_{2}: \mathbb{D}\rightarrow \mathbb{C}$ as
\begin{align}\label{B2}
p_{1}(z)=\frac{(1-z)^2}{z}g(z),\quad p_{2}(z)=\frac{g(z)}{f(z)}.
\end{align}
 From $(\ref{A1})$  and $(\ref{B2})$ it follows that $p_{1} \in\mathcal{P}$, and $p_{2}\in \mathcal{P}(1/2)$, and $f(z)=z/(1-z)^2p_{1}(z)/p_{2}(z)$. A calculation shows that  \begin{align}\label{X1}
\frac{zf'(z)}{f(z)}= \frac{zp_{1}'(z)}{p_{1}(z)}-\frac{zp_{2}'(z)}{p_{2}(z)}+\frac{1+z}{1-z}.
\end{align}
 The bilinear transformation $\omega=(1+z)/(1-z)$ maps the disk $| z|\leq r $ onto disk
\begin{align}\label{X2}
\left| \frac{1+z}{1-z}-\frac{1+r^2}{1-r^2} \right| \leq\frac{2r}{1-r^2}.
\end{align}
Recall that for $p\in \mathcal{P}(\alpha)$, we have
 \begin{align}\label{X3}
\left| \frac{z p'(z)}{p(z)} \right|\leq \frac{2(1-\alpha)r}{(1-r)(1+(1-2\alpha)r)}, \quad  | z|\leq r.
\end{align}
Using  $(\ref{X2})$ and $(\ref{X3})$ in $(\ref{X1})$, we get
\begin{align}\label{D4}
\left|\frac{zf'(z)}{f(z)}-\frac{1+r^2}{1-r^2}\right| \leq \frac{5r+r^2}{1-r^2}.
\end{align}
From (\ref{D4}), it follows that
\begin{align}\label{A26}
\operatorname{Re}\left(\frac{zf'(z)}{f(z)}\right) \geq \frac{1-5r}{1-r^2}\geq 0
\end{align}
for $0\leq r \leq 1/5$. For the function $f_3$ given in \eqref{Z5}, we have
\[\frac{zf_3'(z)}{f_3(z)}=\frac{1+5z}{1-z^2} =0\]
for $z=-1/5$. Thus, the radius of starlikeness of the class $\Pi_{2}$ is $1/5$. All radius  values to be  computed here will be less than $1/5$.

\begin{enumerate}[labelindent=12pt,itemindent=0em,leftmargin=!]
  \item The number  $\rho:= R_{\mathcal{S}^*}(\alpha)$ is the smallest positive root of the equation $\alpha r^2-5 r+1-\alpha=0$. For $0<r\leq R_{\mathcal{S}^*}(\alpha)$, from (\ref{A26}), we have
\[\operatorname{Re}\left( \frac{zf'(z)}{f(z)}\right)\geq  \frac{1-5r}{1-r^2}\geq \frac{1-5\rho}{1-\rho^2}=\alpha.\]
For the  function $f_{2}\in \Pi_{2}$ given in (\ref{Z5}), we have, at $z=-\rho=-R_{\mathcal{S}^*}(\alpha)$,
 \[\frac{zf_3'(z)}{f_3(z)}=\frac{1-5\rho}{1-\rho^2}=   \alpha.\]
This proves that the radius of starlikeness of order $\alpha$ is $R_{\mathcal{S}^*}(\alpha)$.

  \item From (\ref{D4}),  it  follows that
\begin{align}\label{Z6}
\left| \frac{zf'(z)}{f(z)}-1\right| \leq  \left|\frac{zf'(z)}{f(z)}-\frac{1+r^2}{1-r^2}\right| +\frac{2r^2}{1-r^2}
\leq \frac{5r+3r^2}{1-r^2}.
\end{align}
The number $\rho=R_{\mathcal{S}^*_{L}}$, is the   positive root of the equation $5r+3r^2-(1-r^2)(\sqrt{2}-1) = 0$. For $0<r\leq \rho= R_{\mathcal{S}_{L}^{*}}$,  we have
\begin{align}\label{F5}
  \frac{5r+3r^2}{1-r^2}\leq \frac{5\rho+3\rho^2}{1-\rho^2} =\sqrt{2}-1.
\end{align}
Therefore, by (\ref{Z6}), (\ref{F5}), and for $0<r\leq \rho= R_{\mathcal{S}^*_{L}}$, it follows that
\begin{align}\label{Z7}
\left|\frac{zf'(z)}{f(z)} -1\right|   \leq \sqrt{2}-1,
\end{align}and
\begin{align}\label{Z8}
\left|\frac{zf'(z)}{f(z)}+1 \right|\leq\sqrt{2}+1.
\end{align}
The last two inequalities immediately yields
\[\left|\left(\frac{zf'(z)}{f(z)}\right)^2-1 \right|\leq\left| \frac{zf'(z)}{f(z)} +1\right|\, \left| \frac{zf'(z)}{f(z)} -1\right|\leq (\sqrt{2}+1)(\sqrt{2}-1)=1.\] This proves that $ \mathcal{S}^*_{L} $ is at least $R_{\mathcal{S}^*_{L}}$.

  \item
  For $0\leq r\leq \rho:=R_{ \mathcal{S}_{P}}=5-2\sqrt{6}$, we have
for
\[\frac{1}{2}< 1\leq a=\frac{1+\rho^2}{1-\rho^2}=
\frac{5 \sqrt{6}}{12}<3/2\] and, $\rho$ being the smallest positive  root of the equation $r^2-10r+1=0$,
\[\frac{5\rho+\rho^2}{(1-\rho^2)} \leq \frac{1+\rho^2}{1-\rho^2}-\frac{1}{2}.\]
The disk in (\ref{D4})  becomes 
\begin{align*}
\left|\frac{zf'(z)}{f(z)}-\frac{1+\rho^2}{1-\rho^2}\right| \leq \frac{1+\rho^2}{1-\rho^2}-\frac{1}{2}.
\end{align*}
By Lemma \ref{L2}, the  disk in (\ref{D4}) is  inside the region $\Omega_{PAR}$. Thus,  the radius of parabolic starlikeness of the class $\Pi_{2}$ is at least $R_{\mathcal{S}_{p}}$. 

For the function $f_3$ given in (\ref{Z5}) at $z=-\rho$ where $\rho=R_{\mathcal{S}_{p}}$, we have
\[\operatorname{Re}\left(\frac{zf_3'(z)}{f_3(z)}\right)=\frac{1-5\rho}{1-\rho^2}
=\frac{5\rho-\rho^2}{1-\rho^2}=\left|\frac{zf_3'(z)}{f_3(z)}-1\right|.\]

\item For $\rho=R_{\mathcal{S}_{e}^{*}}$, we have  $1/e < a :=(1+\rho^2)/(1-\rho^2)\approx 1.0331 \leq (e+e^{-1})/2$ and
\[\frac{5\rho+\rho^2}{1-\rho^2}= \frac{1+\rho^2}{1-\rho^2}-\frac{1}{e}.\]
The disk in (\ref{D4})  becomes
\begin{align*}
\left|\frac{zf'(z)}{f(z)}-\frac{1+\rho^2}{1-\rho^2}\right| \leq  \frac{1+\rho^2}{1-\rho^2}-\frac{1}{e}.
\end{align*} 
By Lemma~\ref{L3}, this disk  is inside the region $\Omega_{e}$, proving that $\mathcal{S}_{e}^{*}$ radius is at least $R_{\mathcal{S}_{e}^{*}}$. 

The result is sharp for  the function $f_{3}$ given in (\ref{Z5}). For this function,  we have, at  $z= -\rho $ where $\rho=R_{\mathcal{S}_{e}^{*}}$,
\[\left|\log\left(\frac{zf_3'(z)}{f_3(z)}\right)\right|=
\left|\log\left(\frac{1-5\rho}{1-\rho^2}\right)\right|
=\left|\log(e^{-1})\right|=1.\]

\item  For $\rho=R_{\mathcal{S}_{c}^{*}}$, we have  $1/3 < a :=(1+\rho^2)/(1-\rho^2)=\frac{1}{72}(1+5\sqrt{217})\approx 1.03686 \leq 5/2$ and, $\rho$ being the smallest positive root of $r^2-15r+2=0$, 
\[\frac{5\rho+\rho^2}{1-\rho^2}= \frac{1+\rho^2}{1-\rho^2}-\frac{1}{3}.\]
The disk in (\ref{D4})  becomes
\begin{align*}
\left|\frac{zf'(z)}{f(z)}-\frac{1+\rho^2}{1-\rho^2}\right| \leq  \frac{1+\rho^2}{1-\rho^2}-\frac{1}{3}.
\end{align*}
By Lemma~\ref{L3}, this disk  is inside the region $\Omega_{c}$, proving that $\mathcal{S}_{c}^{*}$ radius is at least $R_{\mathcal{S}_{c}^{*}}$.

The radius is sharp for  the function $f_{3}$ given in (\ref{Z5}). At   $z=-\rho $ where $\rho=R_{\mathcal{S}_{c}^{*}}$, we have
\[ \frac{zf'_3(z)}{f_3(z)} = \frac{1-5\rho}{1-\rho^2}  =\frac{1}{3}=\varphi_c(-1)
\in\partial \varphi_c(\mathbb{D})\]
where $\varphi_c(z)=1+4z/3+2z^2/3$.

 \item For   $\rho=R_{ \mathcal{S}_{\sin}^{*}}$, and $a:=(1+\rho^2)/(1-\rho^2)$, we have
    \[|a-1|=\frac{2\rho^2}{1-\rho^2}\approx 0.0465396 <\sin 1\approx 0.8414. \]
and
\[\frac{5\rho+\rho^2}{1-\rho^2}\leq \sin 1-\frac{2\rho^2}{1-\rho^2}. \]
The   disk in (\ref{A}) for $r=\rho$ becomes
\[\left|\frac{zf'(z)}{f(z)} -a\right| =\left|\frac{zf'(z)}{f(z)} - \frac{1+\rho^2}{1-\rho^2} \right|
\leq \sin 1-\frac{2\rho^2}{1-\rho^2}=\sin 1-|1-a|.\]
Lemma \ref{L5} shows that   the disk in \eqref{D4} is inside $\Omega_{s}$
where $\Omega_{s}=:\varphi_s(\mathbb{D})$ is the image of the unit disk $\mathbb{D}$ under the mapping $\varphi_s(z)=1+\sin z$. This proves that the
$\mathcal{S}_{\sin}^{*}$ radius is at least $R_{ \mathcal{S}_{\sin}^{*}}$.

 \item For $\rho= R_{\mathcal{S}_{\leftmoon}^{*}}$, we have
\[a:=\frac{1+\rho^2}{1-\rho^2} \approx 1.02839 \in (\sqrt{2}-1,\sqrt{2}+1)\]
and
\[\frac{5\rho+\rho^2}{1-\rho^2} = \frac{1+\rho^2}{1-\rho^2}+1-\sqrt{2}.\]
The disk in (\ref{D4}) becomes
\[ \left|\frac{zf'(z)}{f(z)} -a \right|\leq 1-| \sqrt{2}-a | \]
 and by Lemma~\ref{L6} it  lies inside $\left\{w : | w^2-1|<2| w |\right\}$.
This shows that $\mathcal{S}_{\leftmoon}^{*}$ radius is at least  $R_{\mathcal{S}_{\leftmoon}^{*}}$. The sharpness follows as  the function $f_3$ defined in (\ref{Z4}) satisfies, at  $z=\rho=R_{ \mathcal{S}_{\leftmoon}^{*}}$,
\[
\begin{split}  \left|\left(\frac{zf_3'(z)}{f_3(z)}\right)^2-1\right|&=
\left|\left(\frac{1-5\rho}{1-\rho^2}\right)^2-1\right|=2(\sqrt{2}-1)\\
& =2  \frac{1-5\rho}{1-\rho^2}
=2 \left|\frac{zf'_3(z)}{f_3(z)} \right|.
\end{split} \]

\item For $\rho=R_{\mathcal{S}^*_{R}}$, we have
\[ 2 ( \sqrt{2}-1 )< a:=\frac{1+\rho^2}{1-\rho^2}\approx 1.00238 \leq \sqrt{2}<2,
\] and
\[\frac{5\rho+\rho^2}{1-\rho^2}= \frac{1+\rho^2}{1-\rho^2}-2(\sqrt{2}-1).\] The disk  (\ref{D4}) becomes
\begin{align*}
\left|\frac{zf'(z)}{f(z)}-a\right|< a-2 ( \sqrt{2}-1).
\end{align*}
By Lemma $\ref{L7}$, this disk lies inside  the domain $\Omega_{r}$. This proves that $ \mathcal{S}_{R}^{*} $ radius  is at least $R_{ \mathcal{S}_{R}^{*}}$.

To prove sharpness, consider the function $f_3\in \Pi_{1}$ given in (\ref{Z4}). At $z=-\rho=-R_{ \mathcal{S}_{R}^{*}}$, we have
 \[ \frac{zf_3'(z)}{f_3(z)} =  \frac{1-5\rho }{1-\rho^2}
 =2(\sqrt{2}-1)=\varphi_r(-1)\in\partial\varphi_r(\mathbb{D})\]
where $\varphi_r(z)=1+(kz+z^2)/(k^2-kz)$, $k=\sqrt{2}+1$. \qedhere 
\end{enumerate}
\end{proof}

We have only obtained   lower bounds for the   $\mathcal{S}_{L}^{*}$ and  $\mathcal{S}_{sin}^{*}$ radii for the class $\Pi_2$ and we believe the bounds are sharp but unable to prove it. 


\end{document}